\newtheorem{definition}{Definition}
\newtheorem{remark}{Remark}
\newtheorem{proposition}{Proposition}
\newtheorem*{theorem}{Theorem}
\newtheorem*{example}{Example}
\begin{document}

\title{Weyl Prior and Bayesian Statistics}
\author{Ruichao Jiang$^{1}$, Javad Tavakoli$^{1,*}$ and Yiqiang Zhao$^{2}$ \\
$^{1}$ Department of Mathematics \\ The University of British Columbia Okanagan \\  Kelowna, BC, Canada  V1V 1V7 \\
$^{2}$ School of Mathematics and Statistics\\ Carlton University \\ Ottawa, ON, Canada K1S 5B6 \\
$^{*}$ Correspondence: javad.tavakoli@ubc.ca
}
\date{April 2020}

\maketitle

\begin{abstract}

When using Bayesian inference, one needs to choose a prior distribution for parameters. The well-known Jeffreys prior is based on the Riemann metric tensor on a statistical manifold. Takeuchi and Amari defined the $\alpha$-parallel prior,, which generalized the Jeffreys prior by exploiting higher-order geometric object, known as Chentsov-Amari tensor. In this paper, we propose a new prior based on the Weyl structure on a statistical manifold. It turns out that our prior is a special case of the $\alpha$-parallel prior with the parameter $\alpha$ equals $-n$, where $n$ is the dimension of the underlying statistical manifold and the minus sign is a result of conventions used in the definition of $\alpha$-connections. This makes the choice for the parameter $\alpha$ more canonical. We calculated the Weyl prior for univariate Gaussian and multivariate Gaussian distribution. The Weyl prior of the univariate Gaussian turns out to be the uniform prior.
\medskip

\noindent \textbf{Keywords:} Information geometry; Bayesian statistics; prior distributions; Conformal geometry
\end{abstract}

\section{Introduction}
In Bayesian inference, a parameter is regarded as a random variable $\Theta$. A density of $\Theta$ is called, by abuse of terminology, a prior distribution $p(\theta)$. After collecting some data, one obtains a conditional density $p(x|\theta)$, referred to as the likelihood function. The Bayes' theorem then computes the posterior distribution $p(\theta|x)$ using $p(\theta)$ and $p(x|\theta)$.  This is interpreted as an update of the information about the unknown parameter $\Theta$ in Bayesian inference. One such choice of the prior distribution $p(\theta)$ is Jeffreys prior $^J\omega$, which is the correct choice of uniform distribution. Here the word "uniform" means uninformative, unfavorable of any particular choice of the parameter.

Information geometry, in its narrowest sense, is an attempt to use differential geometry to study statistical inference. It has found applications in statistical inference, signal processing, and machine learning \cite{Amari2016}. \cite{calin,nielson} are two elementary introductions. In information geometry, geometric structures, for example metric tensors $g$ and affine connections $\nabla$, can be put on the set of prior distributions $\mathcal{P}(\theta)$. These geometric structures help to single out some particular prior distributions, for example, the Jeffreys prior $^J\omega,$ which, by the fundamental theorem of Riemannian geometry, is the unique volume form parallel with respect to the Levi-Civita connection $^{LC}\nabla$. Since the Jeffreys prior $^J\omega$ is provided by geometry, it is automatically invariant under reparametrization, which reflects the opinion that information can be at best not lost during a transformation of parameters and this is encoded in the notion of sufficient statistics. Similarly, if one can find a unique prior distribution satisfying some specified geometric conditions, then that prior distribution is called canonically chosen.  Matsuzoe, Takeuchi, and Amari used information geometry to define the $\alpha$-parallel prior $^\alpha\omega$ such that when $\alpha=0$, it reduces to the Jeffreys prior \cite{matsuzoe2006}.

Historically, Weyl proposed a generalization of general relativity to unify gravity and electromagnetism. Einstein soon pointed out that Weyl's theory predicted substantial broadening of the characteristic length of atoms, which is contradictory to the well-observed thin atomic spectra. Even though Weyl geometry failed the unification of gravity and electromagnetism, which is yet an open problem, Weyl geometry has found applications in possible generalization of general relativity \cite{ciambelli2019weyl} and the differential-geometric study of defects in continuum mechanics \cite{yavari}. Weyl geometry is kept in mathematics. The relation of affine differential geometry, Weyl geometry, and Riemannian geometry are shown below. Let $\pi:E\to{B}$ be a fibre bundle with base $B$ and let each fibre $\pi^{-1}(x),$ $x\in{B}$, be a Lie group $G$, called the structure group of the fibre bundle. For different $G$ we obtain different geometries as follows.
\begin{enumerate}
    \item $GL(n),$ the general linear group, affine differential geometry;
    \item $C(n)\coloneqq\{kA\ |\ k\in{\mathbb{R}^{+}},\ A\in{O(n)}\},$ the conformal group, Weyl geometry;
    \item $O(n),$ the orthogonal group, Riemannian geometry.
\end{enumerate}
By the reduction of structure groups $O(n)\subseteq C(n)\subseteq GL(n)$, and the fact that the smaller the structure group is, the more geometric properties are expected. In our case, the reduction of group gives rise to a canonical choice for the parameter $\alpha$ of the $\alpha$-parallel prior. For more about bundle-theoretic differential geometry, see \cite{kobayashi}.

In this paper, we will use Weyl geometry to define a prior distribution for Bayesian inference, which we call the Weyl prior. We will elucidate the relation between the dimension of a statistical manifold and the parameter $\alpha$ in Takeuchi and Amari's $\alpha$-parallel prior.

The organization of the paper is as follows: In Section~2, we review information geometry and the $\alpha$-parallel prior. We discuss Weyl geometry in Section~3. We define the Weyl prior, and elucidate the relation between the Weyl prior and the $\alpha$-parallel prior in Section~4. We calculate the Weyl prior for the univariate Gaussian distribution as an example in Section~5 and the multivariate Gaussian distribution in Section~6. All functions in the paper are real-valued and smooth, all connections are torsion-free, and the Einstein summation rule is used.

%%%%%%%%%%%%%%%%%%%%%%%%%%%%%%%%%%%%%%%%%%
\section{Information geometry and $\alpha-$priors}
In this section, we review some basics of information geometry. For more details, see \cite{Amari2016}.

Let us consider a statistical model $\mathcal{P},$ which is a set of parametric densities $\mathcal{P}=\{p(x|\theta)\}.$ $\mathcal{P}$ can be geometrized as follows: first, we introduce the Fisher metric tensor, which is a 2nd order tensor,
\begin{definition}
The Fisher metric tensor is defined by
\begin{equation}
    g_{ij}=\text{E}_{\theta}\left[\partial_{i}l\partial_{j}l\right]
\end{equation}
where $E_{\theta}$ is the the transition kernel $X\times\Theta\to{[0,\infty)}$, $l$ is the log likelihood function, and $\partial_{i}$ is the partial derivative with respect to coordinate $i$.
\end{definition}
Then, we introduce the Amari-Chentsov tensor, which is a 3rd order tensor.
\begin{definition}[Amari-Chentsov Tensor]
The Amari-Chentsov tensor $C$ is defined by
\begin{equation}
C_{ijk}=\text{E}_{\theta}\left[\partial_{i}l\partial_{j}l\partial_{k}l\right].
\end{equation}
\end{definition}
\begin{remark}
The Amari-Chentsov tensor $C$ defined above satisfies
\[
    C = \nabla{g}.
\]
In other words, $C$ is the covariant derivative of the metric tensor $g.$
\end{remark}
In Riemannian geometry, $C$ vanishes everywhere, which is required if the length of a tangent vector is to be preserved under the parallel transport. In information geometry, this requirement is dropped and thus a duality theory arises. Let $\nabla$ be an arbitrary torsion-free affine connection on a Riemannian manifold $(M,g).$ The dual connection $\nabla^*$ of $\nabla$ plays an important role in information geometry.
\begin{definition}[Dual Connection]
The dual connection $\nabla^*$ on a Riemannian manifold $(M,g)$ with affine connection $\nabla$ is defined as the unique affine connection satisfying the following equation:
\begin{equation}
    Xg_p(Y,Z)=g_p(\nabla_XY,Z)+g_p(Y,\nabla^{*}_{X}Z),
\end{equation}
where $p\in{M}$ and $X,Y,Z\in{T_{p}M.}$
\end{definition}
\begin{remark}
The dual connection $\nabla^*$ preserves the metric tensor $g$ together with $\nabla:$
$$
g_p(X,Y)=g_q(\Pi{X},\Pi^*Y),
$$
where $X,Y\in{T_{p}M,}$ and $\Pi$ and $\Pi^*$ are parallel transports induced by $\nabla$ and $\nabla^*$, respectively, along some curve from $p$ to $q$. In general, $g_p(X,Y)\neq g_q(\Pi{X},\Pi{Y})$ and $g_p(X,Y)\neq g_q(\Pi^*{X},\Pi^*Y)$, unless $\nabla=\nabla^*=^{LC}\nabla$. See \cite{Amari2016}.
\end{remark}
Now we introduce $\alpha$-connections.
\begin{definition}
\label{def}
The $\alpha$-connections are defined in terms of Christoffel symbols by
\begin{equation}
\label{alphaConnection}
    ^\alpha\Gamma^i_{jk} = ^{LC}\Gamma^i_{jk} - \frac{\alpha}{2}g^{il}C_{ljk},
\end{equation}
where $\alpha\in{\mathbb{R}}$ and LC stands for Levi-Civita.
\end{definition}
\begin{remark}
\label{dual}
The dual connection of $\nabla^{\alpha}$ is then given by
\begin{equation}
    \nabla^{\alpha*}=\nabla^{-\alpha}.
\end{equation}
\end{remark}
\begin{remark}
\label{comparison}
The $\alpha$-parallel prior $^{\alpha}\omega$ is the volume form parallel with respect to $\nabla^{\alpha}.$ Unlike the Jeffreys prior, which always exists, the $\alpha$-parallel prior do not necessarily exist. $\alpha$-parallel prior exists if and only if the Ricci curvature tensor is symmetric \cite{matsuzoe2006}. However, if $^{\alpha}\omega$ exists for one $\alpha\in{\mathbb{R}},$ then it exists for all $\alpha$ \cite{takeuchi}.
\end{remark}
The following characterization will be used in Section~5 to obtain the relation between the $\alpha$-parallel prior and the Weyl prior defined therein.
\begin{proposition}\cite{matsuzoe2006}
\label{alpha}
Let $(M,g,^\alpha\nabla)$ be a statistical manifold. If there exists an exact $1$-form $T=\text{d}\Omega$ for some function $\Omega$ determined by $\nabla$ and $g$, then the $\alpha$-parallel prior is $^{\alpha}\omega=\exp\{-\frac{\alpha}{2}\Omega\}\sqrt{\det{g}}.$
\end{proposition}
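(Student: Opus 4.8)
The plan is to work in a local coordinate chart $(\theta^{1},\dots,\theta^{n})$ and reduce the parallelism condition defining ${}^{\alpha}\omega$ to a single first-order system for a scalar density. I would write the candidate volume form as $\omega = f(\theta)\,\mathrm{d}\theta^{1}\wedge\cdots\wedge\mathrm{d}\theta^{n}$ with $f>0$; by Remark~\ref{comparison} the $\alpha$-parallel prior is precisely the $\omega$ satisfying $\nabla^{\alpha}\omega=0$, so the whole problem collapses to determining $f$.

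First I would compute the covariant derivative of an $n$-form. Using $\nabla^{\alpha}_{k}\mathrm{d}\theta^{i} = -\,{}^{\alpha}\Gamma^{i}_{kj}\,\mathrm{d}\theta^{j}$ together with the fact that any wedge product containing a repeated differential vanishes, only the diagonal terms $j=i$ survive the Leibniz expansion, yielding
\begin{equation}
\nabla^{\alpha}_{k}\omega = \bigl(\partial_{k}f - f\,{}^{\alpha}\Gamma^{l}_{kl}\bigr)\,\mathrm{d}\theta^{1}\wedge\cdots\wedge\mathrm{d}\theta^{n}.
\end{equation}
Thus $\omega$ is $\nabla^{\alpha}$-parallel if and only if $\partial_{k}\log f = {}^{\alpha}\Gamma^{l}_{kl}$ for each $k$: the full connection has been replaced by its contraction, an ordinary $1$-form.

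Next I would evaluate this trace from Definition~\ref{def}. Contracting the upper index of (\ref{alphaConnection}) with its last lower index gives ${}^{\alpha}\Gamma^{l}_{kl} = {}^{LC}\Gamma^{l}_{kl} - \tfrac{\alpha}{2}\,g^{lm}C_{mkl}$. Invoking the classical identity ${}^{LC}\Gamma^{l}_{kl} = \partial_{k}\log\sqrt{\det g}$ and setting $T_{k} := g^{lm}C_{mkl}$ (the contraction of the Amari-Chentsov tensor, well defined by its total symmetry and equal to the $1$-form $T$ in the statement), the parallelism equation becomes
\begin{equation}
\partial_{k}\log f = \partial_{k}\log\sqrt{\det g} - \frac{\alpha}{2}\,T_{k}.
\end{equation}
Finally, the hypothesis $T=\mathrm{d}\Omega$ means $T_{k}=\partial_{k}\Omega$, so the right-hand side is an exact gradient; integrating yields $\log f = \log\sqrt{\det g} - \tfrac{\alpha}{2}\Omega + \mathrm{const}$, hence ${}^{\alpha}\omega = \exp\{-\tfrac{\alpha}{2}\Omega\}\sqrt{\det g}$ up to the normalizing constant absorbed into the prior.

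I expect the one genuine subtlety, as opposed to the routine tensor bookkeeping, to be the integrability of the overdetermined system $\partial_{k}\log f = {}^{\alpha}\Gamma^{l}_{kl}$: it is $n$ equations in the single unknown $f$, solvable exactly when its right-hand side is a closed $1$-form. Since $\partial_{k}\log\sqrt{\det g}$ is automatically exact, closedness reduces entirely to that of $T$, and the exactness assumed in the statement is precisely what upgrades local to global solvability. This is also the point of contact with Remark~\ref{comparison}: the closedness of $T$ is equivalent to the symmetry of the Ricci tensor of $\nabla^{\alpha}$, the stated existence criterion for the $\alpha$-parallel prior, so the hypothesis is not an artifact of the method but the honest obstruction.
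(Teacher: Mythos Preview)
Your argument is correct. Note, however, that the paper does not supply its own proof of Proposition~\ref{alpha}: the result is quoted from \cite{matsuzoe2006} and used as a black box. What the paper \emph{does} prove is the main Theorem, and its method there is essentially the same computation you carry out, transplanted to the Weyl connection: write the candidate prior as $f\sqrt{\det g}$, differentiate covariantly using the scalar-density rule $\nabla_j\sqrt{\det g}=\partial_j\sqrt{\det g}-\Gamma^{i}_{ji}\sqrt{\det g}$, contract the Christoffel symbols in~(\ref{tobecontacted}) to obtain ${}^{W}\Gamma^{i}_{ji}=\partial_j\log\sqrt{\det g}+\tfrac{n}{2}\varphi_j$, and integrate under the exactness hypothesis on $\varphi$. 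Your proof of Proposition~\ref{alpha} is the $\alpha$-connection analogue of exactly this calculation, with $-\tfrac{\alpha}{2}T$ playing the role of $\tfrac{n}{2}\varphi$; the two arguments line up term by term, and your closing remark on integrability (closedness of $T$ $\Leftrightarrow$ symmetry of the $\alpha$-Ricci tensor) is the counterpart of the paper's appeal to Remark~\ref{comparison} at the end of its proof.
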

\begin{remark}
$\text{d}\Omega$ is known as the Chebyshev $1$-form. A differential form $\phi$ is called closed if the exterior derivative vanishes i.e. $\text{d}\phi=0$, and is called exact if there exists a differential form $\varphi$ such that $\phi=\text{d}\varphi$. By definition, every exact form is closed. By Poincare's lemma, every closed form is locally exact. Because statistical manifolds are simply connected, closedness implies exactness.
\end{remark}
\section{Weyl Geometry}
In this section, we review some concepts of Weyl geometry which are needed in the next section. For more details, see \cite{folland}.

Two Riemannian metrics $g$ and $g'$ on a manifold $M$ are said to be conformally equivalent if $g'=\mathrm e^\lambda g$ for some smooth function $\lambda$ on $M$.

A conformal structure $\mathcal{C}$ on $M$ is an equivalent class of conformally equivalent Riemannian metrics, i.e. $\mathcal{C}\coloneqq\{g' | g'=\mathrm e^\lambda g\}$.

A Weyl structure is a map $F: \mathcal{C} \to\Lambda^1(M)$ from the conformal structure $\mathcal{C}$ to the set of $1-$forms on $M,$ satisfying
$$
F(\mathrm e^\lambda g)=F(g)-d\lambda.
$$
The image of $g$ under $F$ is called the Weyl $1$-form $F(g)\coloneqq\varphi.$

A Weyl structure enables us to translate a scalar product $(\ ,\ )_p$ at $p$ to $(\ ,\ )_q$ at $q$ along a curve $c: [0,1]\to M:$
\begin{equation}
    (\ ,\ )_q=\exp{\left[\int_0^1c^*\varphi\right]}g_q,
\end{equation}
where $c^*\varphi$ is the pullback of the Weyl $1-$form $\varphi$ along curve $c.$ A Weyl manifold is a manifold with a Weyl structure.
\begin{remark}
The meaning of this equation is: If we start with a scalar product $(,)_p$ at a point $p$ arising from the conformal class $\mathcal{C}$, then there exists a metric tensor $g\in \mathcal{C}$ 
extending $(,)_p$, i.e., $g_p=(,)_p$. The value of this particular choice of $g$ at another point $q$ is $g_q$. However, different choice of $g$ gives rise to different $g_q$. The scalar product $(,)_q$ determined by Weyl translation is proven to be independent of $g$ \cite{folland}. Hence by Weyl translation, we can compare lengths of vectors at different points on a Weyl manifold; whereas with only the conformal structure $\mathcal{C}$, we can only compare ratios of lengths.
\end{remark}

An affine connection $\nabla$ is said to be a Weyl connection if the parallel transport of a scalar product under $\nabla$ coincides with the Weyl translation.

The Weyl connection is characterized by the following propositions.
\begin{proposition}[\cite{folland}]
An affine connection $\nabla$ is a Weyl connection if and only if $\nabla g +\varphi\otimes g=0$ for all $g\in\mathcal{C}.$
\end{proposition}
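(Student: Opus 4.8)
The plan is to unwind the definition of a Weyl connection into a differential condition along curves, and then to recognize that condition as the pointwise tensor identity $\nabla g + \varphi\otimes g = 0$. First I would fix a representative $g\in\mathcal{C}$ with Weyl $1$-form $\varphi=F(g)$, an arbitrary curve $c:[0,1]\to M$ from $p$ to $q$, and two vector fields $X(t),Y(t)$ that are parallel along $c$ with respect to $\nabla$, i.e. $\nabla_{\dot c}X=\nabla_{\dot c}Y=0$. Parallel transport of the scalar product by $\nabla$ means transporting $g_p$ as a $(0,2)$-tensor, so the transported product evaluated on the parallel fields is constant in $t$: it equals $g_p(X(0),Y(0))$. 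On the other hand, the Weyl translation produces the scalar product $w(t)=\exp\left[\int_0^t c^*\varphi\right]g_{c(t)}$. The connection is a Weyl connection precisely when these two agree along every curve, and since both coincide at $t=0$, this is equivalent to requiring $\frac{d}{dt}w(t)(X,Y)\equiv 0$ whenever $X,Y$ are parallel.

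Then I would differentiate $w(t)(X(t),Y(t))$ using the product rule. Writing $E(t)=\exp\left[\int_0^t c^*\varphi\right]$, the fundamental theorem of calculus gives $E'(t)=E(t)\,\varphi_{c(t)}(\dot c(t))$, while differentiating $g_{c(t)}(X(t),Y(t))$ and using $\nabla_{\dot c}X=\nabla_{\dot c}Y=0$ yields exactly $(\nabla_{\dot c}g)(X,Y)$, since the two correction terms vanish by parallelism. Collecting terms and dividing by the nonvanishing factor $E(t)$ leaves $\varphi(\dot c)\,g(X,Y)+(\nabla_{\dot c}g)(X,Y)=0$. Because $c$, and hence $\dot c$, together with $X,Y$ are arbitrary at each point, this is the pointwise statement $(\nabla_W g)(X,Y)+\varphi(W)g(X,Y)=0$ for all $W,X,Y$, that is, $\nabla g+\varphi\otimes g=0$. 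Reversing each step gives the converse, so the two conditions are equivalent for the fixed representative $g$.

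Finally, to justify the phrase ``for all $g\in\mathcal{C}$'', I would verify conformal consistency: if $g'=e^\lambda g$, then the Weyl-structure axiom forces $\varphi'=F(g')=\varphi-d\lambda$. A direct computation gives $\nabla g'=e^\lambda(d\lambda\otimes g+\nabla g)$ and $\varphi'\otimes g'=e^\lambda(\varphi\otimes g-d\lambda\otimes g)$, whose sum is $e^\lambda(\nabla g+\varphi\otimes g)$. Hence vanishing of $\nabla g+\varphi\otimes g$ for one representative is equivalent to its vanishing for every conformally equivalent metric, so the condition is well posed on the whole conformal class. I expect the main obstacle to be the first paragraph: stating precisely what ``the parallel transport of a scalar product coincides with the Weyl translation'' means, and confirming that matching the two at the level of parallel vector fields is both necessary and sufficient. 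Once that derivative computation is set up correctly, the remaining steps are routine applications of the product rule and tensoriality.
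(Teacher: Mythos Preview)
The paper does not actually prove this proposition: it is stated as a cited result from Folland \cite{folland} and is used as a black box in what follows, so there is no in-paper proof to compare against.

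That said, your argument is correct and is essentially the standard proof one finds in Folland's original article. The only point I would tighten is the passage from ``the derivative vanishes along every curve'' to the pointwise tensor identity: you should make explicit that it suffices to evaluate at $t=0$, where $\dot c(0)$ can be any tangent vector $W\in T_pM$ and $X(0),Y(0)$ can be any vectors in $T_pM$ (with the parallel extensions then determined by $\nabla$). That immediately yields $(\nabla_W g)(X,Y)+\varphi(W)g(X,Y)=0$ at every $p$, and the converse follows since the vanishing derivative forces $E(t)\,g_{c(t)}(X(t),Y(t))$ to be constant. Your conformal-consistency paragraph is a nice addition and confirms that the condition descends to the class $\mathcal{C}$, exactly as the statement requires.
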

\begin{proposition}[Fundamental Theorem of Weyl Geometry \cite{folland}]
\label{weylunique}
There exists a unique torsion-free Weyl connection $^{W}\nabla$ on a Weyl manifold $M.$ The Christoffel symbols of $^{W}\nabla$ are given by
\begin{equation}
\label{tobecontacted}
    ^W\Gamma^i_{jk}=^{LC}\Gamma^i_{jk}+\frac{1}{2}\left(\delta^i_j\varphi_{k}+\delta^i_k\varphi_{j}-g^{im}g_{jk}\varphi_{m}\right),
\end{equation}
where $\delta^i_j$ is the Kronecker delta.
\end{proposition}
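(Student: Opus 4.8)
The plan is to mimic the proof of the fundamental theorem of Riemannian geometry, replacing the metric-compatibility condition $\nabla g = 0$ by the Weyl non-metricity condition furnished by the preceding proposition. By that proposition, a torsion-free connection is a Weyl connection precisely when $\nabla g + \varphi\otimes g = 0$ for every $g\in\mathcal{C}$, which in components reads $\nabla_k g_{ij} = -\varphi_k g_{ij}$, i.e. $\partial_k g_{ij} - \Gamma^l_{ki}g_{lj} - \Gamma^l_{kj}g_{il} = -\varphi_k g_{ij}$. The whole proof then rests on solving this algebraic system for the Christoffel symbols.

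First I would dispose of the quantifier ``for all $g\in\mathcal{C}$''. I claim the condition is conformally invariant, so that it need only be imposed for a single representative. Indeed, if $\nabla_k g_{ij} = -\varphi_k g_{ij}$ holds and $g' = \mathrm{e}^{\lambda}g$, then the defining property of a Weyl structure gives $\varphi' = F(g') = \varphi - d\lambda$, and a one-line Leibniz computation yields $\nabla_k g'_{ij} = \mathrm{e}^{\lambda}(\partial_k\lambda)g_{ij} - \mathrm{e}^{\lambda}\varphi_k g_{ij} = -\varphi'_k g'_{ij}$. Hence it suffices to fix one $g$ with $\varphi = F(g)$; the equation then propagates to the whole conformal class automatically. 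I expect this to carry the conceptual weight of the theorem, since it is exactly the relation $F(\mathrm{e}^{\lambda}g) = F(g) - d\lambda$ that makes a single connection compatible with every metric in $\mathcal{C}$ at once.

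With $g$ fixed, the derivation is the Koszul cyclic-permutation trick. I would write the component equation three times with the indices $(i,j,k)$ cyclically permuted, add two of the resulting identities and subtract the third, and invoke torsion-freeness $\Gamma^l_{jk} = \Gamma^l_{kj}$ together with the symmetry of $g$ to cancel all but one Christoffel term. This isolates $2\,\Gamma^l_{ij}g_{lk}$; contracting with $\tfrac12 g^{km}$ then gives a closed-form expression for $\Gamma^m_{ij}$. The metric-derivative part reassembles into ${}^{LC}\Gamma^m_{ij}$, while the $\varphi$-terms collapse, via $g^{km}g_{jk}=\delta^m_j$ and $g^{km}g_{ki}=\delta^m_i$, into $\tfrac12(\delta^m_j\varphi_i + \delta^m_i\varphi_j - g^{km}g_{ij}\varphi_k)$, which is exactly the asserted formula after relabeling the free and dummy indices.

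Uniqueness is then immediate, since the derivation used only the two defining conditions and made no arbitrary choice: any torsion-free Weyl connection must have these Christoffel symbols. For existence I would run the argument in reverse, \emph{defining} ${}^{W}\nabla$ by the displayed formula and verifying that it satisfies $\nabla g + \varphi\otimes g = 0$. That ${}^{W}\nabla$ is a genuine affine connection and is torsion-free comes for free, because the difference ${}^{W}\Gamma^i_{jk} - {}^{LC}\Gamma^i_{jk} = \tfrac12(\delta^i_j\varphi_k + \delta^i_k\varphi_j - g^{im}g_{jk}\varphi_m)$ is a $(1,2)$-tensor symmetric in the lower indices $j,k$; adding such a tensor to the Levi-Civita connection preserves both the connection transformation law and the vanishing of torsion. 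The only genuinely delicate point is the sign bookkeeping in the cyclic-permutation step, which I would carry out carefully so as to reproduce the exact signs in the statement.
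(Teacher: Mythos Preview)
Your argument is correct and is the standard one: impose the Weyl non-metricity condition $\nabla_k g_{ij}=-\varphi_k g_{ij}$, check its conformal invariance via $F(\mathrm{e}^{\lambda}g)=F(g)-d\lambda$, run the Koszul cyclic-permutation trick using torsion-freeness, and read off the Christoffel symbols; uniqueness follows from the derivation and existence from the reverse verification. The paper itself supplies no proof of this proposition at all---it simply quotes the result from Folland \cite{folland}---so there is nothing to compare your approach against beyond noting that it reproduces exactly the classical argument found in that reference.
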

\section{Weyl Prior}
In this section, we define the Weyl prior and show its relation to the $\alpha$-parallel prior.

First, we define the Weyl prior as follows.
\begin{definition}[Weyl Prior]
Let $(M,g)$ be an $n-$dimensional Riemannian manifold with the conformal structure $\mathcal{C}=[g]$ and the Weyl structure $F.$ Let $^{W}\nabla$ be the Weyl connection. The Weyl prior $^{W}\omega$ is  defined as the unique volume form parallel with respect to $^{W}\nabla.$
\end{definition}
\begin{remark}
The uniqueness of the Weyl prior is the result of the uniqueness of the Weyl connection.
\end{remark}
Now we prove the main result of this paper.
\begin{theorem}
Let $(M,g)$ be a Riemannian manifold. Let $^W\nabla$ and $^{-n}\nabla$ be the Weyl connection and the $-n$-connection, i.e. the $\alpha$-connection with $\alpha=-n$, where $n$ is the dimension of $M.$ Suppose that the $-n$-prior $^{-n}\omega$ exists, then
$$
^W\omega=^{-n}\omega.
$$
\end{theorem}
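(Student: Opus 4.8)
The plan is to reduce the equality of the two \emph{volume forms} to an equality of the \emph{contracted Christoffel symbols} of the two connections, and then to compute both contractions explicitly. First I would recall the standard fact that a volume form $\omega=f\,dx^1\wedge\cdots\wedge dx^n$ is parallel with respect to a torsion-free connection $\nabla$ with Christoffel symbols $\Gamma^i_{jk}$ if and only if $\partial_k\log f=\Gamma^i_{ik}$ for every $k$: using $\nabla_k\,dx^i=-\Gamma^i_{kl}\,dx^l$ one gets $\nabla_k\omega=(\partial_k f-f\,\Gamma^i_{ik})\,dx^1\wedge\cdots\wedge dx^n$, since only the diagonal term $l=i$ survives the wedge product. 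Consequently two torsion-free connections admit the \emph{same} parallel volume form precisely when their contracted Christoffel symbols $\Gamma^i_{ik}$ agree for all $k$; moreover the parallel density is then governed by the identical first-order PDE, so the existence assumed for ${}^{-n}\omega$ transfers automatically to ${}^W\omega$ and I never have to argue existence separately.

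Next I would contract the Weyl Christoffel symbols from Proposition~\ref{weylunique}. Setting $j=i$ in \eqref{tobecontacted} and summing, the three correction terms collapse via $\delta^i_i=n$, $\delta^i_k\varphi_i=\varphi_k$ and $g^{im}g_{ik}\varphi_m=\delta^m_k\varphi_m=\varphi_k$, giving
\[
{}^W\Gamma^i_{ik}={}^{LC}\Gamma^i_{ik}+\tfrac12\bigl(n\varphi_k+\varphi_k-\varphi_k\bigr)={}^{LC}\Gamma^i_{ik}+\tfrac{n}{2}\varphi_k .
\]
Contracting Definition~\ref{def} at $\alpha=-n$ yields in parallel
\[
{}^{-n}\Gamma^i_{ik}={}^{LC}\Gamma^i_{ik}+\tfrac{n}{2}\,g^{il}C_{lik}={}^{LC}\Gamma^i_{ik}+\tfrac{n}{2}\,T_k ,
\]
where $T_k:=g^{il}C_{lik}$ is the Tchebychev (Chebyshev) $1$-form, the trace of the Amari--Chentsov tensor.

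The crux, and the step I expect to carry the real weight, is the identification $\varphi_k=T_k$: the Weyl $1$-form of the Weyl structure attached to a statistical manifold is exactly the trace of the cubic tensor $C$. With this identification the two contractions above are literally equal, so both connections define the same parallel volume form and ${}^W\omega={}^{-n}\omega$. I would stress where the dimension enters: the factor $n$ on the Weyl side comes purely from the geometric trace $\delta^i_i=n$, whereas on the $\alpha$-side the factor $n$ is the choice $-\alpha/2=n/2$; matching the two forces $\alpha=-n$, which is precisely the canonical value the paper advertises. As a consistency check I would compare against Proposition~\ref{alpha}: writing $T=d\Omega$ and using ${}^{LC}\Gamma^i_{ik}=\partial_k\log\sqrt{\det g}$, both PDEs integrate to $f=\exp\{\tfrac{n}{2}\Omega\}\sqrt{\det g}$, i.e.\ the $\alpha=-n$ instance of ${}^\alpha\omega=\exp\{-\tfrac{\alpha}{2}\Omega\}\sqrt{\det g}$. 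The obstacle is therefore conceptual rather than computational: justifying that the natural Weyl structure on a statistical manifold has Weyl $1$-form equal to $T$, so that the proportional-to-$g$ non-metricity of ${}^W\nabla$ and the fully cubic non-metricity of ${}^{-n}\nabla$, though genuinely different as connections, nevertheless produce the identical trace.
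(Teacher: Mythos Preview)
Your argument is correct and is essentially the paper's own: both contract the Weyl Christoffel symbols to obtain ${}^W\Gamma^i_{ij}={}^{LC}\Gamma^i_{ij}+\tfrac{n}{2}\varphi_j$ and then match the factor $\tfrac{n}{2}$ against the $\alpha$-side. The only cosmetic difference is that the paper integrates the resulting PDE to the closed form ${}^W\omega=\exp\{\tfrac{n}{2}\Omega\}\sqrt{\det g}$ and reads off $\alpha=-n$ by comparison with Proposition~\ref{alpha}, whereas you compare the two contracted Christoffel symbols directly; the content is identical.

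Regarding the identification $\varphi_k=T_k$ that you single out as the crux and obstacle: the paper does not prove this either. It silently uses the \emph{same} potential $\Omega$ in its Equation~(\ref{weylprior}) and in Proposition~\ref{alpha}, which is precisely the assumption that the Weyl $1$-form of the statistical Weyl structure coincides with the Chebyshev form. In the paper this is effectively a definition---made explicit only in Section~5, where $\varphi$ is computed directly as a trace of $C$---not a theorem to be established. So you should treat $\varphi=T$ as part of the data of the Weyl structure placed on $(M,g,C)$, not as a lemma awaiting proof; once you do, your argument is complete and, if anything, a little more transparent than the original.
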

\begin{proof}
Consider an arbitrary volume form $f\sqrt{\det{g}},$ where $f$ is a positive function on $M.$ For $f\sqrt{\det{g}}$ to be parallel with respect to $^W\omega,$ it is necessary and sufficient that
\begin{align}
\label{condition}
^W\nabla\left(f\sqrt{\det{g}}\right)=f ^W\nabla\sqrt{\det{g}}+^W\nabla f\sqrt{\det{g}}=0.
\end{align}
Componentwise, Equation (\ref{condition}) becomes
\begin{align}
\label{condtioncomponent}
f ^W\nabla_{j}\sqrt{\det{g}}+^W\nabla_{j}f\sqrt{\det{g}}=f ^W\nabla_{j}\sqrt{\det{g}}+\partial_{j}f\sqrt{\det{g}},
\end{align}
since covariant derivative coincides with partial derivative for functions.

Since $\sqrt{\det{g}}$ is a scalar density of weight $1,$ its covariant derivative is given by
\begin{equation}
\label{cal}
^W\nabla_{j}\sqrt{\det{g}}=\partial_{j}\sqrt{\det{g}}-^W\Gamma_{j}\sqrt{\det{g}},
\end{equation}
where $\Gamma_{j}$ is obtained by the contraction of Equation (\ref{tobecontacted}) over $i$ and $k:$
\begin{align*}
^W\Gamma_{j}=^W\Gamma^i_{ji}&=^{LC}\Gamma^i_{ji}+\frac{1}{2}\left(\delta^i_j\varphi_i+\delta^i_i\varphi_j-g^{im}g_{ji}\varphi_m\right)\\
&=\partial_j\ln{\sqrt{\det{g}}}+\frac{1}{2}\left(\varphi_j+n\varphi_j-\delta^m_j\varphi_m\right)\\
&=\partial_j\ln{\sqrt{\det{g}}}+\frac{n}{2}\varphi_j.
\end{align*}
Substitute Equation (\ref{cal}) into Equation (\ref{condtioncomponent}), we obtain
\begin{equation}
\label{collect}
    \partial_jf=\frac{n}{2}\varphi_jf.
\end{equation}
Since the covariant derivative coincides with exterior derivative for functions, collect indices in Equation (\ref{collect})
\begin{equation}
\label{result}
    \varphi=\frac{2}{n}\text{d}\ln{f}.
\end{equation}
Assume for now that the Weyl $1$-form $\varphi$ is exact, that is $\varphi=\text{d}\Omega$ for some function $\Omega$ on $M.$ Then from Equation (\ref{result}), the Weyl prior is given by
\begin{equation}
\label{weylprior}
^W\omega=\exp\{\frac{n}{2}\Omega\}\sqrt{\det{g}}.
\end{equation}
By comparison of Equation (\ref{weylprior}) with Proposition \ref{alpha}, the theorem is proved under the assumption of the exactness of the Weyl $1$-form.

Since we proved that the Weyl prior $^W\omega$ is the $\alpha-$prior with $\alpha=-n,$ and we required the existence of $^n\omega,$ our assumption of the exactness of the Weyl $1-$form $\varphi$ is indeed true by Remark \ref{comparison}.
\end{proof}
\begin{remark}
\label{minus}
The minus sign in $\alpha=-n$ is a result of the definition of $\alpha$-connection. By Remark \ref{dual}, the dual connection of $^\alpha\nabla$ is $^{-\alpha}\nabla,$ we would have $\alpha=n$ here, had we defined the $\alpha$-connection to be its dual connection in Definition \ref{def}. It would seem more natural to consider the dual prior of the Weyl prior.
\end{remark}
\section{Weyl Prior for Gaussian Family}
In this section, we calculate the Weyl prior of the Gaussian family as an example.
\begin{example}[Gaussian Family]
Consider the Gaussian family
\end{example}
$$
\mathcal{P}=\left\{p(x|\mu,\sigma^2)=\frac{1}{\sqrt{2\pi}\sigma}\exp{\left[-\frac{1}{2\sigma^2}(x-\mu)^2\right]}\ |\ (\mu,\sigma)\in{M}\right\}.
$$
Choose $(\mu,\sigma^2)$ as a coordinate system, we have
\begin{equation*}
    \partial_{\mu}l=\frac{x-\mu}{\sigma^2},
\end{equation*}
\begin{equation*}
    \partial_{\sigma^2}l=\frac{(x-\mu)^2}{2\sigma^4}-\frac{1}{2\sigma^2},
\end{equation*}
where $l$ is the log likelihood function.

The first element of the Fisher metric tensor $g$ in $(\mu,\sigma^2)$-coordinate is given by
\begin{align*}
    g_{\mu\mu}&=\text{E}_{\theta}\left[\partial_{\mu}l\partial_{\mu}l\right]\\
    &=\int_{-\infty}^\infty\frac{(x-\mu)^2}{\sigma^4}\frac{1}{\sqrt{2\pi}\sigma}\exp{\left[-\frac{1}{2\sigma^2}(x-\mu)^2\right]}dx\\
    &=\frac{1}{\sigma^2},
\end{align*}
where $E_\theta$ is the conditional expectation of $X$ given $\Theta.$
The other elements of the Fisher metric tensor are
\begin{equation*}
    g_{\mu\sigma^2}=g_{\sigma^2\mu}=0,
\end{equation*}
and
\begin{equation*}
    g_{\sigma^2\sigma^2}=\frac{1}{2\sigma^4}.
\end{equation*}
Hence,
\begin{equation}
    \sqrt{\det{g}}=\frac{1}{\sqrt{2}\sigma^3}.
\end{equation}
To calculate the Weyl $1$-form, we first calculate the Amari-Chentsov tensor $C,$
\begin{align*}
    C_{\mu\mu\mu}&=\text{E}_{\theta}\left[\partial_{\mu}l\partial_{\mu}l\partial_{\mu}l\right]\\
    &=\int_{-\infty}^\infty\frac{(x-\mu)^3}{\sigma^6}\frac{1}{\sqrt{2\pi}\sigma}\exp{\left[-\frac{1}{2\sigma^2}(x-\mu)^2\right]}dx\\
    &=0.
\end{align*}
Similarly,
\[
    C_{\sigma^2\mu\mu}=C_{\mu\sigma^2\mu}=C_{\mu\mu\sigma^2}=\frac{1}{\sigma^4},
\]
\[
C_{\sigma^2\sigma^2\mu}=C_{\sigma^2\mu\sigma^2}=C_{\mu\sigma^2\sigma^2}=0,
\]
and
\[
    C_{\sigma^2\sigma^2\sigma^2}=\frac{1}{\sigma^6}.
\]
Hence, the Weyl $1$-form is given by
\begin{equation}
\begin{split}
    \varphi &= \frac{1}{2}C_{ijk}g^{jk}d\theta^i\\
    & =\frac{3}{2\sigma^2}d\sigma^2.
\end{split}
\end{equation}
Now, it is easy to check that $\varphi = \text{d}(\frac{3}{2}\ln{\sigma^2})$ is an exact form. Hence for Gaussian family $\mathcal{P},$ Weyl prior exists and is given by
\begin{equation}
    \begin{split}
        ^W\omega&=\exp{\left\{\frac{2}{2}\frac{3}{2}\ln{\sigma^2}\right\}}\frac{1}{\sqrt{2}\sigma^3}\\
        &=\frac{1}{\sqrt{2}}.
    \end{split}
\end{equation}

\begin{remark}
Based on our calculation, we find that the Weyl prior for the univariate Gaussian distribution with unknown mean and unknown variance is just the uniform prior. This shows that the uniform prior is in fact an uninformative prior. This counter-intuitive result is related to the fact that every 2-dimensional manifold is conformally-flat, which can be proved using the existence of isothermal coordinates in 2 dimensions \cite{KULKARNI}.
\end{remark}

\section{Multivariate Gaussian}
The above example can be extended to the multivariate case. Consider the multivariate Gaussian distribution
\begin{equation}
    f(x|\mu,\Sigma)=\frac{1}{(2\pi)^{n/2}\sqrt{\det{\boldsymbol{\Sigma}}}}\exp{\{-\frac{1}{2}(\boldsymbol{x}-\boldsymbol{\mu})^{\intercal}\boldsymbol\Sigma^{-1}(\mathbf{x}-\boldsymbol{\mu})\}},
\end{equation}
where $\boldsymbol\mu$ is the mean vector and $\boldsymbol\Sigma$ is the covariance matrix.

Using matrix calculus, we have
\begin{equation}
    \partial_{\boldsymbol\mu}l=\boldsymbol\Sigma^{-1}(\boldsymbol{x}-\boldsymbol\mu)
    \end{equation}
and
\begin{equation}
    \begin{split}
        \partial_{\boldsymbol\Sigma}l &= -\frac{1}{2}\boldsymbol\Sigma^{-1}+\frac{1}{2}\boldsymbol\Sigma^{-1}(\boldsymbol{x}-\boldsymbol\mu)(\boldsymbol{x}-\boldsymbol\mu)^{\intercal}\boldsymbol\Sigma^{-1}\\
        &=-\frac{1}{2}\boldsymbol\Sigma^{-1}+\frac{1}{2}[\boldsymbol\Sigma^{-1}(\boldsymbol{x}-\boldsymbol\mu)]\otimes[\boldsymbol\Sigma^{-1}(\boldsymbol{x}-\boldsymbol\mu)]\\
        &=-\frac{1}{2}\boldsymbol\Sigma^{-1}+\frac{1}{2}(\boldsymbol\Sigma^{-1}\otimes\boldsymbol\Sigma^{-1})[(\boldsymbol{x}-\boldsymbol\mu)\otimes(\boldsymbol{x}-\boldsymbol\mu)]
    \end{split}
\end{equation}

We can now compute the Fisher information matrix.
\begin{equation}
    \begin{split}
         g_{\boldsymbol\mu\boldsymbol\mu}&=\text{E}_{\theta}[\partial_{\boldsymbol\mu}\otimes\partial_{\boldsymbol\mu}]\\
         &=(\boldsymbol\Sigma^{-1}\otimes\boldsymbol\Sigma^{-1})\text{E}_{\theta}[(\boldsymbol{x}-\boldsymbol\mu)\otimes(\boldsymbol{x}-\boldsymbol\mu)]\\
         &=(\boldsymbol\Sigma^{-1}\otimes\boldsymbol\Sigma^{-1})\boldsymbol\Sigma\\
         &=\boldsymbol\Sigma^{-1}\boldsymbol\Sigma\boldsymbol\Sigma^{-\intercal}\\
         &=\boldsymbol\Sigma^{-1}
    \end{split}
\end{equation}
where the second last line is by the definition of covariance matrix.

Similarly,
\begin{equation}
    g_{\boldsymbol\mu\boldsymbol\Sigma}=g_{\boldsymbol\Sigma\boldsymbol\mu}=\boldsymbol{0},
\end{equation}
and
\begin{equation}
    g_{\boldsymbol\Sigma\boldsymbol\Sigma} = \frac{1}{2}\boldsymbol\Sigma^{-1}\otimes\boldsymbol\Sigma^{-1}.
\end{equation}
The Amari-Chentsov tensor can be computed in the same way.
\begin{equation}
    \begin{split}
        C_{\boldsymbol\mu\boldsymbol\mu\boldsymbol\mu}=\text{E}_{\theta}[\partial_{\boldsymbol\mu}l\otimes\partial_{\boldsymbol\mu}l\otimes\partial_{\boldsymbol\mu}l]=\boldsymbol{0}.
    \end{split}
\end{equation}
\begin{equation}
    C_{\boldsymbol\Sigma\boldsymbol\mu\boldsymbol\mu}=C_{\boldsymbol\mu\boldsymbol\Sigma\boldsymbol\mu}=C_{\boldsymbol\mu\boldsymbol\mu\boldsymbol\Sigma}=\boldsymbol\Sigma^{-1}\otimes\boldsymbol\Sigma^{-1}.
\end{equation}
\begin{equation}
        C_{\boldsymbol\mu\boldsymbol\Sigma\boldsymbol\Sigma}=C_{\boldsymbol\Sigma\boldsymbol\mu\boldsymbol\Sigma}=C_{\boldsymbol\Sigma\boldsymbol\Sigma\boldsymbol\mu}=\boldsymbol{0}.
\end{equation}
\begin{equation}
\label{last}
    C_{\boldsymbol\Sigma\boldsymbol\Sigma\boldsymbol\Sigma}=\boldsymbol\Sigma^{-1}\otimes\boldsymbol\Sigma^{-1}\otimes\boldsymbol\Sigma^{-1}.
\end{equation}
The detail computation of Eqn (\ref{last}) is as follows.
\begin{equation}
    \begin{split}
        C_{\boldsymbol\Sigma\boldsymbol\Sigma\boldsymbol\Sigma}&=\text{E}_{\theta}[\partial_{\boldsymbol\Sigma}l\otimes\partial_{\boldsymbol\Sigma}l\otimes\partial_{\boldsymbol\Sigma}l]\\
        &=\text{E}_{\theta}\{-\frac{1}{8}\boldsymbol\Sigma^{-1}\otimes\boldsymbol\Sigma^{-1}\otimes\boldsymbol\Sigma^{-1}+\frac{1}{8}\boldsymbol\Sigma^{-1}\otimes(\boldsymbol\Sigma^{-1}\otimes\boldsymbol\Sigma^{-1})\left[(\boldsymbol{x}-\boldsymbol{\mu})\otimes(\boldsymbol{x}-\boldsymbol{\mu})\right]\otimes\boldsymbol\Sigma^{-1}\\
        &+\frac{1}{8}\left(\boldsymbol\Sigma^{-1}\otimes\boldsymbol\Sigma^{-1}\right)\left[(\boldsymbol{x}-\boldsymbol{\mu})\otimes(\boldsymbol{x}-\boldsymbol{\mu})\right]\otimes(\boldsymbol\Sigma^{-1}\otimes\boldsymbol\Sigma^{-1})-\frac{1}{8}\left(\boldsymbol\Sigma^{-1}\otimes\boldsymbol\Sigma^{-1}\otimes\boldsymbol\Sigma^{-1}\otimes\boldsymbol\Sigma^{-1}\right)\\
        &(\boldsymbol{x}-\boldsymbol{\mu})\otimes(\boldsymbol{x}-\boldsymbol{\mu})\otimes(\boldsymbol{x}-\boldsymbol{\mu})\otimes(\boldsymbol{x}-\boldsymbol{\mu})\otimes\boldsymbol{\Sigma}^{-1}+\frac{1}{8}\boldsymbol{\Sigma}^{-1}\otimes\boldsymbol{\Sigma}^{-1}\otimes\left(\boldsymbol{\Sigma}^{-1}\otimes\boldsymbol{\Sigma}^{-1}\right)\\
        &\left[(\boldsymbol{x}-\boldsymbol{\mu})\otimes(\boldsymbol{x}-\boldsymbol{\mu})\right]-\frac{1}{8}\boldsymbol{\Sigma}^{-1}\otimes\left(\boldsymbol{\Sigma}^{-1}\otimes\boldsymbol{\Sigma}^{-1}\right)\left[(\boldsymbol{x}-\boldsymbol{\mu})\otimes(\boldsymbol{x}-\boldsymbol{\mu})\right]\otimes\left(\boldsymbol{\Sigma}^{-1}\otimes\boldsymbol{\Sigma}^{-1}\right)\\
        &\left[(\boldsymbol{x}-\boldsymbol{\mu})\otimes(\boldsymbol{x}-\boldsymbol{\mu})\right]-\frac{1}{8}\left(\boldsymbol\Sigma^{-1}\otimes\boldsymbol\Sigma^{-1}\right)\left[(\boldsymbol{x}-\boldsymbol{\mu})\otimes(\boldsymbol{x}-\boldsymbol{\mu})\right]\otimes\boldsymbol{\Sigma}^{-1}\otimes\left(\boldsymbol{\Sigma}^{-1}\otimes\boldsymbol{\Sigma}^{-1}\right)\\
        &\left[(\boldsymbol{x}-\boldsymbol{\mu})\otimes(\boldsymbol{x}-\boldsymbol{\mu})\right]+\frac{1}{8}\left(\boldsymbol\Sigma^{-1}\otimes\boldsymbol\Sigma^{-1}\otimes\boldsymbol\Sigma^{-1}\otimes\boldsymbol\Sigma^{-1}\otimes\boldsymbol\Sigma^{-1}\otimes\boldsymbol\Sigma^{-1}\right)[(\boldsymbol{x}-\boldsymbol{\mu})\otimes(\boldsymbol{x}-\boldsymbol{\mu})\\
       &\otimes(\boldsymbol{x}-\boldsymbol{\mu})\otimes(\boldsymbol{x}-\boldsymbol{\mu})\otimes(\boldsymbol{x}-\boldsymbol{\mu})\otimes(\boldsymbol{x}-\boldsymbol{\mu})]\}=\boldsymbol\Sigma^{-1}\otimes\boldsymbol\Sigma^{-1}\otimes\boldsymbol\Sigma^{-1}\\
    \end{split}
\end{equation}
The above expression can be evaluated by 4-th and 6-th moments of multivariate Gaussian.

The Weyl prior is then given by:
\begin{equation}
    \begin{split}
        \varphi&=\frac{1}{2}C_{ijk}g^{jk}d\theta^{i}\\
        &=\frac{1}{2}C_{\boldsymbol\Sigma\boldsymbol{\mu}\boldsymbol{\mu}}g^{\boldsymbol{\mu}\boldsymbol{\mu}}d\boldsymbol\Sigma+\frac{1}{2}C_{\boldsymbol\Sigma\boldsymbol\Sigma\boldsymbol\Sigma}g^{\boldsymbol\Sigma\boldsymbol\Sigma}d\boldsymbol\Sigma\\
        &=\frac{1}{2}\left(\boldsymbol\Sigma^{-1}\otimes\boldsymbol\Sigma^{-1}\right)\boldsymbol\Sigma d\boldsymbol\Sigma+\frac{1}{2}\left(\boldsymbol\Sigma^{-1}\otimes\boldsymbol\Sigma^{-1}\otimes\boldsymbol\Sigma^{-1}\right)2\left(\boldsymbol\Sigma\otimes\boldsymbol\Sigma\right)d\boldsymbol\Sigma\\
        &=\frac{3}{2}\boldsymbol\Sigma^{-1}d\boldsymbol\Sigma\\
        &=\text{d}\left(\frac{3}{2}\ln{\det{\boldsymbol{\Sigma}}}\right).
    \end{split}
\end{equation}
The Weyl prior is thus given by:
\begin{equation}
\label{result2}
    \begin{split}
        ^W\omega&=\exp{\left\{\frac{n+(n+1)n/2}{2}\frac{3}{2}\ln\det{\boldsymbol{\Sigma}}\right\}}\sqrt{\det{\boldsymbol{\Sigma}^{-1}}\det{\left[\frac{1}{2}\boldsymbol\Sigma^{-1}\otimes\boldsymbol\Sigma^{-1}\right]}}\\
        &=\left(\det{\boldsymbol\Sigma}\right)^{(3n^2+9n)/8}\frac{\left(\det{\boldsymbol{\Sigma}}\right)^{(-2n-1)/2}}{2^{n/2}}\\
        &=\frac{\left(\det{\boldsymbol{\Sigma}}\right)^{(n-1)(3n+4)/8}}{2^{n/2}},
    \end{split}
\end{equation}
where in the first line, $n+(n+1)n/2$ is the dimension of the statistical manifold for the multivariate Gaussian distribution.
\begin{remark}
Our calculation of the Weyl prior of the multivariate Gaussian distribution is generally not an uniform prior. But Eqn (\ref{result2}) shows that when $n=1$, that is, the univariate case, the Weyl prior is indeed the uniform prior. This is in accord with our direct calculation for the univariate case.
\end{remark}

\section{Conclusion and discussion}
We discussed Weyl geometry and Weyl prior in this paper. We also calculated Weyl prior for the Gaussian family as an example.

The underlying principle of Jeffreys prior, $\alpha$-parallel prior, and Weyl prior is the concept of invariance in statistics. Jeffreys prior is invariant under a change of the coordinate of parameters. Weyl prior and $\alpha$-parallel prior, as generalizations of Jeffreys prior, automatically satisfy this invariance. Moreover, Weyl prior, as a volume form defined on a Weyl manifold, is also invariant under a gauge transformation \cite{calinarticle}. Also invariant under the gauge transformation is the generalized conjugate connection \cite{calinarticle}.

One possible use of the Weyl prior is using the uniform prior for distributions with 2 parameters. This is because any 2-dimensional manifold is conformally-flat.

%%%%%%%%%%%%%%%%%%%%%%%%%%%%%%%%%%%%%%%%%%
\end{document}